\newtheorem{prethm}{{\bf Theorem}}
\newenvironment{thm}{\begin{prethm}{\hspace{-0.5
               em}{\bf.}}}{\end{prethm}}
\newtheorem{prepro}[prethm]{Proposition}
\newtheorem{prelem}[prethm]{Lemma}
\newenvironment{lem}{\begin{prelem}{\hspace{-0.5
               em}{\bf.}}}{\end{prelem}}
\newtheorem{precor}[prethm]{Corollary}
\newtheorem{prerem}[prethm]{{\bf Remark}}
\newenvironment{rem}{\begin{prerem}\em{\hspace{-0.5
              em}{\bf.}}}{\end{prerem}}
\newtheorem{preconj}[prethm]{{\bf Conjecture}}
\newtheorem{preexample}{{\bf Example}}
\newtheorem{preproof}{{\bf Proof.}}
\newenvironment{proof}[1]{\begin{preproof}{\rm
               #1}\hfill{$\Box$}}{\end{preproof}}
\newcommand{\al}{\alpha}
\newcommand{\la}{\lambda}
\newcommand{\be}{\beta}
\newcommand{\e}{{\rm E}}
\newcommand{\he}{{\rm HE}}
\renewcommand{\thefootnote}
\begin{document}

\title{Bounds for the H\"uckel energy of a graph}

 \author{Ebrahim Ghorbani$^{\,\rm a,b,}$\footnote{
This work was done  while the first
author was visiting the department of mathematics of POSTECH. He would like
to thank the department for its hospitality and support.}
,~Jack H. Koolen$^{\,\rm c,d,}$\footnote{
He was partially supported by
a grant from the Korea Research Foundation funded by the Korean government (MOEHRD)
under grant number KRF-2008-314-C00007.}
,~  Jae Young Yang$^{\,\rm c}$\\
{\footnotesize {$^{\rm a}$Department of Mathematical Sciences, Sharif University of Technology,}}\\
{\footnotesize {P.O. Box 11155-9415,
 Tehran, Iran}}\\
{\footnotesize {$^{\rm b}$School of Mathematics, Institute for Research in Fundamental Sciences (IPM),}}\\
 {\footnotesize {P.O. Box 19395-5746, Tehran, Iran}}\\
{\footnotesize{$^{\rm c}$Department of Mathematics,  POSTECH, Pohang 790-785, South Korea}}\\
{\footnotesize{$^{\rm d}$Pohang Mathematics Institute,  POSTECH, Pohang 790-785, South Korea}}}

\footnotetext{E-mail Addresses: {\tt e\_ghorbani@math.sharif.edu} (E. Ghorbani), {\tt koolen@postech.ac.kr} (J.H. Koolen),
{\tt rafle@postech.ac.kr} (J.Y. Yang)}

\date{\today}

\maketitle

\begin{abstract}
Let $G$ be a  graph on $n$ vertices with $r := \lfloor n/2 \rfloor$ and let $\lambda _1
\geq\cdots\geq \lambda _{n} $ be adjacency eigenvalues of $G$.  Then the H\"uckel energy of $G$,  HE($G$), is
defined as
$$\he(G) =\left\{
  \begin{array}{ll}
    2\sum_{i=1}^{r} \lambda_i, & \hbox{if $n= 2r$;} \\
    2\sum_{i=1}^{r} \lambda_i + \lambda_{r+1}, & \hbox{if $n= 2r+1$.}
  \end{array}
\right.$$
The concept of H\"uckel energy was introduced by  Coulson as it gives a good approximation for the $\pi$-electron energy of molecular graphs.
We obtain two upper bounds and a lower bound for HE$(G)$. When $n$ is even, it is  shown that equality holds in both upper bounds if and only if $G$ is a strongly regular graph with parameters $(n, k, \lambda, \mu) = (4t^2 +4t +2,\, 2t^2 +3t +1,\,
t^2 +2t,\, t^2 + 2t +1),$ for positive integer $t$.
Furthermore, we will give an infinite family of these strongly regular graph whose construction was communicated by Willem Haemers to us. He attributes the construction to J.J. Seidel.

\end{abstract}

\section{Introduction}

\noindent
Throughout  this paper,  all graphs are simple and undirected.
Let $G$ be a graph with $n$ vertices and $A$ be the  adjacency
matrix of $G$. Then the eigenvalues of  $G$ are defined as the eigenvalues of
$A$. As all eigenvalues of $A$ are real, we can rearrange them as
$\lambda _1 \geq\cdots\geq \lambda _{n} $.
I. Gutman (see \cite{Gut}) defined the energy of $G$, E$(G)$, by
$$\e(G) := \sum_{i=1}^{n} | \lambda_i |.$$
In chemistry, the energy of a given molecular graph is of interest since it
can be related to the total $\pi$-electron energy of the molecule represented by that graph.
The reason for Gutman's definition is that E$(G)$ gives a good approximation for the $\pi$-electron energy of a molecule where $G$ is then the corresponding molecular graph. For a survey on the energy of graphs, see \cite{Gut}.
 The  {\em H\"uckel energy} of $G$, denoted by HE$(G)$, is defined as
$$\he(G) =\left\{
  \begin{array}{ll}
    2\sum_{i=1}^{r} \lambda_i, & \hbox{if $n= 2r$;} \\
    2\sum_{i=1}^{r} \lambda_i + \lambda_{r+1}, & \hbox{if $n= 2r+1$.}
  \end{array}
\right.$$
The idea of introducing H\"uckel energy (implicitly) exists in Erich H\"uckel's first
paper \cite{h31} in 1931 and is also found  in his book \cite{h40}.
The concept was explicitly used in 1940 by Charles Coulson~\cite{c}
but, most probably, can be found also in his earlier articles. In a ``canonical"
form, the theory behind the H\"uckel energy was formulated in a series of
papers by Coulson and Longuet-Higgins, of which the first (and most important)
is \cite{cl}. In comparison with energy, the  H\"uckel energy of a graph gives a better approximation for the total $\pi$-electron energy of the molecule represented by that graph, see \cite{Fow}.
Clearly for a graph on $n $ vertices, HE$(G) \leq$ E$(G)$, and if $G$ is bipartite, then equality holds. Koolen and Moulton in \cite{KoMo01, KoMo03} gave upper bounds on
the energy of graphs and bipartite graphs, respectively. These bounds have been generalized in several ways. Obviously, the upper bounds of Koolen and Moulton also give  upper bounds for the H\"uckel energy of graphs. In this paper, we obtain better upper bounds for the H\"uckel energy of a graph. More precisely, we prove that for a graph $G$ with $n$ vertices and $m$ edges,
\begin{equation}\label{even}
\he(G) \leq\left\{
  \begin{array}{ll}
    \frac{2m}{n-1}+ \frac{\sqrt{2m(n-2)\left(n^2-n-2m\right)}}{n-1}, & \hbox{if $m\le\frac{n^3}{2(n+2)}$;} \\
    \frac{2}{n}\sqrt{mn(n^2-2m)}<\frac{4m}{n}, & \hbox{otherwise;}
  \end{array}
\right.
\end{equation}
if $n$ is even, and
\begin{equation}\label{odd}
  \he(G)\le\left\{
  \begin{array}{ll}
    \frac{2m}{n-1}+\frac{\sqrt{2mn(n^2-3n+1)(n^2-n-2m)}}{n(n-1)}
& \hbox{if $m\le\frac{n^2(n-3)^2}{2(n^2-4n+11)}$;} \\
    \frac{1}{n}{\small\sqrt{2m(2n-1)(n^2-2m)}}, & \hbox{otherwise;}
  \end{array}
\right.
\end{equation}
 if $n$ is odd. Then we show that
\begin{equation}\label{nbnd}
   \he(G)\le \frac{n}{2}\left(1+\sqrt{n-1}\right),
\end{equation}
if $n$ is even, and
\begin{equation}\label{nbodd}
\he(G)< \frac{n}{2}\left(1+\sqrt{n}-\frac{1}{\sqrt{n}}\right),
\end{equation}
if $n$ is odd. Moreover, equality is attained  in (\ref{even})  if and only if equality  attained  in (\ref{nbnd}) if and only if $G$ is a strongly regular graph with  parameters $(n, k, \lambda, \mu) =
(4t^2 +4t +2,\, 2t^2 +3t +1,\,
t^2 +2t,\, t^2 + 2t +1)$.
The proofs of the above upper bounds are given in Section~2.
It is known that $\e(G)\ge 2\sqrt{n-1}$ for any graph $G$ on $n$ vertices with no isolated vertices with equality if and only if
$G$ is the star $K_{1,n-1}$ (see \cite{ccgh}). In Section~3, we prove that the same bound holds for H\"uckel energy.
In the last section, we give a construction of srg$(4t^2 +4t +2,\, 2t^2 +3t +1,\,
t^2 +2t,\, t^2 + 2t +1)$.


\section{The upper bound for H\"uckel  energy}

In this section we prove (\ref{even}),  (\ref{odd}), (\ref{nbnd}), and (\ref{nbodd}). The equality cases are also discussed.
We begin by stating a lemma which will be used later.

\begin{lem}\label{lem} Let  $G$ be a graph with $n$ vertices and $m$ edges. Suppose  $r:=\lfloor n/2\rfloor$ and $$\al:=\sum_{i=1}^r\la_i^2(G).$$
 If $m\ge n-1\ge2$, then
\begin{equation}\label{almn}
    \frac{\al}{r}\le\frac{4m^2}{n^2}.
\end{equation}
\end{lem}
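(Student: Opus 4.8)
The plan is to combine the trace identities $\sum_{i=1}^n\lambda_i=0$ and $\sum_{i=1}^n\lambda_i^2=2m$ with the elementary bound $\lambda_1\ge 2m/n$ (Rayleigh quotient against the all-ones vector), splitting the spectrum at position $r$. Put $s:=\sum_{i=1}^r\lambda_i$; since the top $r$ eigenvalues have average at least the overall average $0$, we have $s\ge0$ and $\sum_{i=r+1}^n\lambda_i=-s$. Writing $\beta:=\sum_{i=r+1}^n\lambda_i^2=2m-\al$ and applying Cauchy--Schwarz to the $n-r$ bottom eigenvalues gives $\beta\ge s^2/(n-r)$, hence $\al\le 2m-s^2/(n-r)$, so everything reduces to a good lower bound on $s$. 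For that, $\lambda_r\ge\lambda_{r+1}\ge$ (average of $\lambda_{r+1},\dots,\lambda_n$) $=-s/(n-r)$, and since $\lambda_i\ge\lambda_r$ for $2\le i\le r$, one gets $s\ge\lambda_1+(r-1)\lambda_r\ge\lambda_1-(r-1)s/(n-r)$, i.e.\ $s\ge\frac{n-r}{n-1}\lambda_1$; inserting $\lambda_1\ge 2m/n$ yields $s\ge\frac{2m(n-r)}{n(n-1)}$.

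Plugging this back gives
$$\al\ \le\ 2m-\frac{4m^2(n-r)}{n^2(n-1)^2},$$
and it remains to check that the right-hand side does not exceed $\frac{4m^2r}{n^2}$. Clearing denominators, this is an inequality of the shape $m\ge c(n,r)$ for an explicit rational $c(n,r)$, which I would verify by treating $n=2r$ and $n=2r+1$ separately; in both cases $c(n,r)$ comes out slightly below $n^2/(2r)$.

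The step I expect to be the genuine obstacle is exactly this last comparison, because the whole chain is essentially tight for the sparsest admissible graphs --- the star $K_{1,n-1}$ and its near-trees --- where $\lambda_1\ge 2m/n$ is badly lossy while $\frac{4m^2r}{n^2}$ is very nearly attained. Most of the range is painless: as soon as $m\ge n^2/(2r)$ the trivial bound $\al\le\sum_{i=1}^n\lambda_i^2=2m$ already gives $\al\le 2m\le\frac{4m^2r}{n^2}$, so the real work is confined to the short window $n-1\le m<n^2/(2r)$, i.e.\ $m$ of order $n$. On that window I would replace the crude estimate for $s$ by a sharper, star-type energy bound (of the same order as $\e(G)\ge2\sqrt{n-1}$, with equality at $K_{1,n-1}$), exploiting that a graph with so few edges carries almost no spectral mass away from its Perron eigenvalue; producing that residual estimate, and dovetailing it with the easy range, is the crux of the argument.
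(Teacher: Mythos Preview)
Your chain through Cauchy--Schwarz is valid, but the bound it produces is too weak to close the lemma at the boundary $m=n-1$. Concretely, $\al\le 2m-\dfrac{4m^2(n-r)}{n^2(n-1)^2}$ implies $\al/r\le 4m^2/n^2$ precisely when
\[
m\ \ge\ c(n,r):=\frac{n^2(n-1)^2}{2\bigl(r(n-1)^2+(n-r)\bigr)},
\]
and one checks that $c(n,r)>n-1$ always: for $n=2r$ one gets $c=\dfrac{2r(2r-1)^2}{(2r-1)^2+1}\in(n-1,n)$, so for integer $m$ your argument covers exactly $m\ge n$, which the trivial bound $\al\le 2m$ already delivers. (The odd case is similar; your $s$-bound gains at most one unit over the trivial range.) You recognise this and defer the remaining window to an unspecified ``star-type energy bound'' for $s$, but that is where the actual content of the lemma lies. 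A gesture toward $\e(G)\ge 2\sqrt{n-1}$ is not a proof: that inequality is established \emph{later} in the paper, only for graphs without isolated vertices, whereas the disconnected $m=n-1$ graphs you must handle typically do have them; and even granting $s\ge\sqrt{n-1}$, the resulting inequality collapses to an equality at $m=n-1$, $n=2r$, leaving no slack.

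The paper's proof is structural rather than analytic and looks nothing like your outline. For $m\ge n$ the graph contains an induced cycle, and interlacing forces $\la_n^2+\la_{n-1}^2\ge 2$, hence $\al\le 2m-2$, which already suffices. The case $m=n-1$ is then dispatched by a case analysis on the component structure: a connected graph is a tree, where bipartite symmetry gives $\al=m$ directly; for the disconnected configurations the authors interlace against explicit small graphs ($K_{1,2}$, the paw, the diamond, $K_4$) to push enough mass into the bottom eigenvalues and obtain $\al\le 2m-4$. There is no analytic shortcut at the boundary; the combinatorial input your outline omits is exactly what the proof requires.
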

\begin{proof}{First, suppose $m\ge n$. Then $G$ contains a cycle, and so by interlacing, we see
$\la_n^2+\la_{n-1}^2\ge2.$
Therefore, $\al/r\le(2m-2)/r\le4m^2/n^2. $
 If $m=n-1$ and $G$ is connected, then $G$ is a tree. Thus  $\al= m$, and   obviously (\ref{almn}) holds.
So in the rest of proof we assume that $G$ is disconnected and $m=n-1$.
If $G$ has at least three two non-trivial components, then at least one of the components contains a cycle.
The component containing a cycle has either an eigenvalue at most $\le-2$, or two eigenvalues $\le-1$ and the other two components
have an eigenvalue $\le-1$.
It turns out that
$\la_{n}^2+\la_{n-1}^2+\la_{n-2}^2+\la_{n-3}^2\ge4$ where $n\ge7$.
Hence
\begin{equation}\label{almn2}
 \al\le2m-4.
\end{equation}
It is easily seen that (\ref{almn}) follows from (\ref{almn2}).
Now, suppose that $G$ has two non-trivial connected components $G_1$ and $G_2$, say. Let $G_1, G_2$ have $n_1,n_2$ vertices and $m_1,m_2$ edges, respectively. First suppose $n_1,n_2\ge3$. If $G_1$ or $G_2$ contains a $K_{1,2}$ as an induced subgraph, we are done by interlacing.  So one may assume that both $G_1$ and $G_2$ contain a triangle.  It turns out that $m_1\ge n_1$ and $m_2\ge n_2$. Hence $G$ must have an isolated vertex which implies $n\ge7$. On the other hand, by interlacing, the four smallest eigenvalues of $G$ are at most $-1$ implying (\ref{almn2}). Now, assume that $n_1=2$. So $G_2$ must contain a cycle $C_{\ell}$. We may assume that $C_{\ell}$ has no chord. If $\ell\ge4$, we are done. So let $\ell=3$. If $n_2=3$, then $G$  does not have isolated vertices, i.e., $G=C_3\cup K_2$, for which (\ref{almn}) holds. Thus $n_2\ge4$ which means that at least one of the diamond graph, the paw graph (see Figure~\ref{fig}), or $K_4$ is induced subgraph of $G$. If it contains either the diamond or the paw graph, we are done by interlacing. If it contains $K_4$, then $G$ must have at least two isolated vertices, i.e., $n\ge8$. Thus the four smallest eigenvalues of $G$ are at most $-1$ which implies (\ref{almn2}).
Finally, assume that $G$ has exactly one non-trivial component $G_1$ with $n_1$ vertices. It turns out that $n_1\ge3$ and $G_1$ contains a cycle.
By looking at the table of graph spectra of \cite[pp. 272--3]{cds}, it is seen that if $n_1=3,4$, $G$ satisfies (\ref{almn}).
If $n_1\ge5$, then making use of the table mentioned above  and interlacing it follows that $\la_{n}^2+\la_{n-1}^2+\la_{n-2}^2\ge4$
unless $G_1$ is a complete graph in which case $G$ has at least 11 vertices and the four smallest eigenvalues of $G$ is $-1$, implying the result.}
\end{proof}

\begin{figure}
\begin{center}  
  \includegraphics[width=2cm]{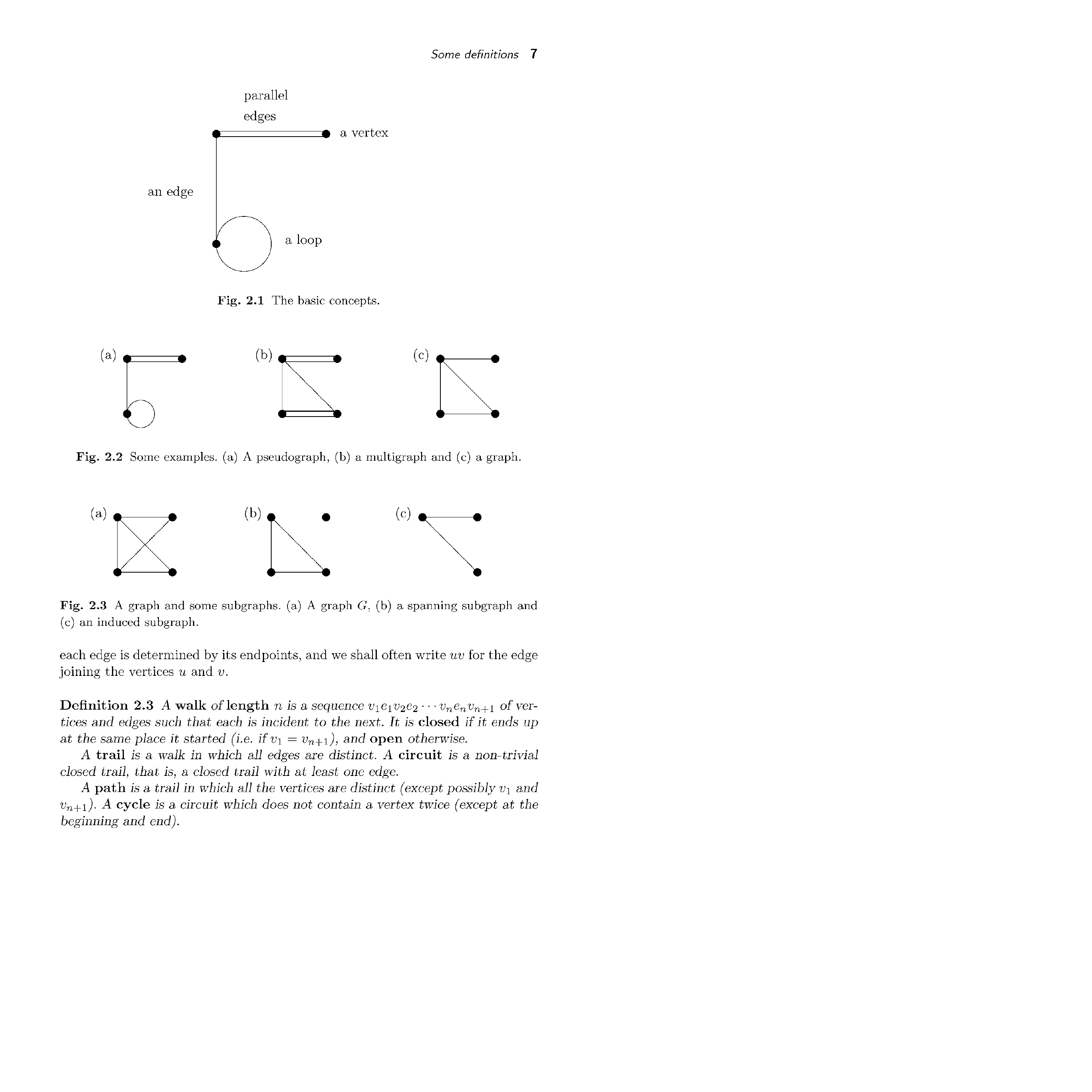}
\hspace{1.5cm}
\includegraphics[width=2cm]{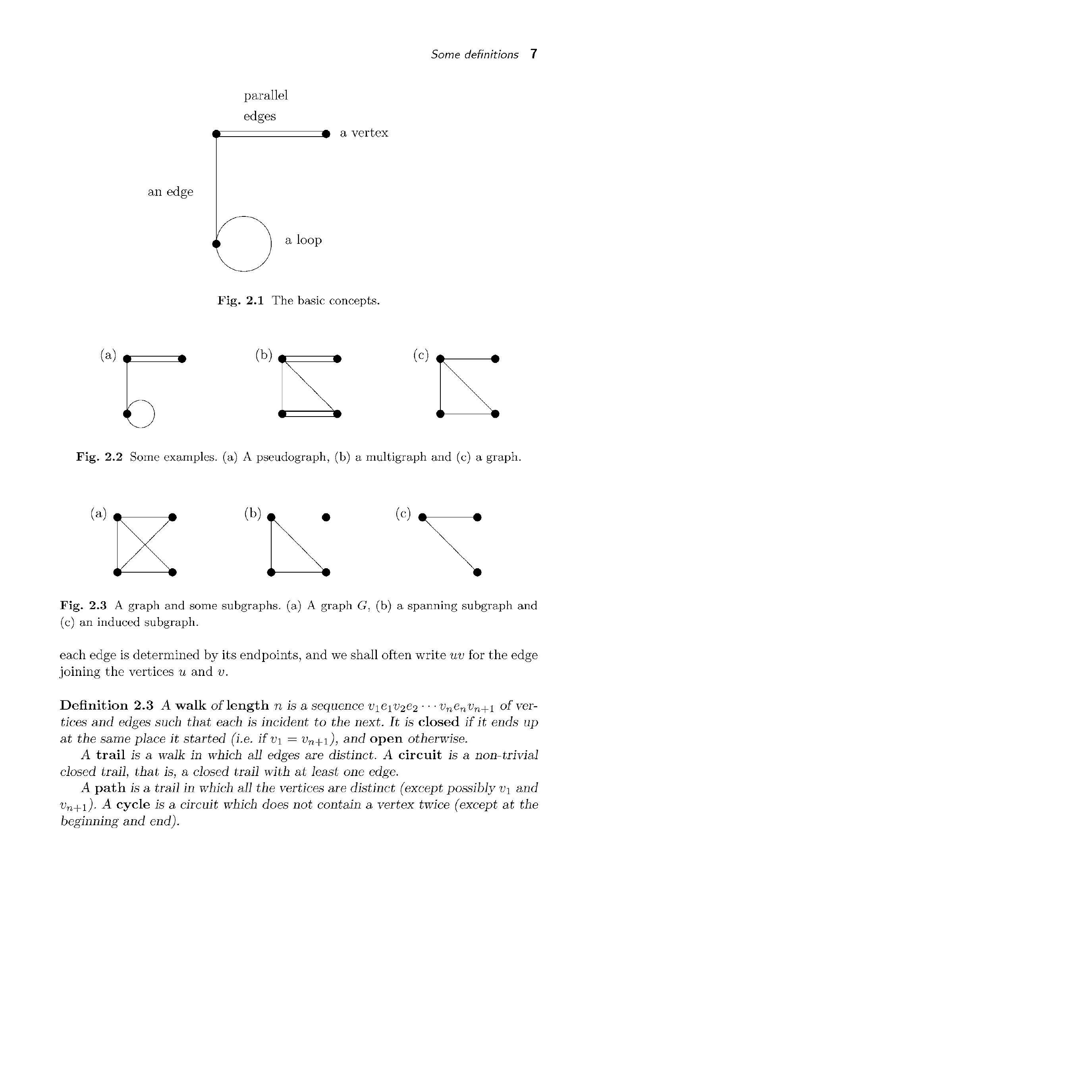}
  \caption{The paw and the diamond graphs}\label{fig}
\end{center}
\end{figure}

\subsection{Even order graphs}

In this subsection we prove (\ref{even}) and (\ref{nbnd}) for graphs of an even order. The cases of equalities are also characterized.

\begin{thm}\label{thmeven}
Let $G$ be a graph on $n$ vertices and $m$ edges where $n$ is even. Then (\ref{even}) holds.
Moreover, equality holds if and only if $n = 4t^2 + 4t +2$ for some positive integer $t$, $m = (2t^2+2t+1)(2t^2+3t+1)$ and $G$ is a strongly regular graph with  parameters $(n, k, \lambda, \mu) =
(4t^2 +4t +2,\, 2t^2 +3t +1,\,
t^2 +2t,\, t^2 + 2t +1)$.
\end{thm}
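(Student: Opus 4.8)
The plan is to produce two valid upper bounds for $\he(G)=2S$, where $S:=\sum_{i=1}^r\la_i$ with $r=n/2$, and to note that the right side of (\ref{even}) is the smaller of the two on the respective ranges of $m$. Write $\al=\sum_{i=1}^r\la_i^2$ and recall $\sum_i\la_i=0$, $\sum_i\la_i^2=2m$, so $\sum_{i=r+1}^n\la_i=-S$ and $\sum_{i=r+1}^n\la_i^2=2m-\al$; recall also $\la_1\ge 2m/n$ and, since $|\la_n|\le\la_1$, that $\la_i^2\le\la_1^2$ for every $i$. One may assume $m\ge n-1$, so Lemma~\ref{lem} applies; the range $m\le n-2$ is handled separately by cruder estimates. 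For the second bound, two applications of Cauchy--Schwarz give $S^2=\bigl(\sum_{i=r+1}^n\la_i\bigr)^2\le r\bigl(2m-\al\bigr)$, and $\al\ge\la_1^2\ge(2m/n)^2$ then yields $S^2\le r\bigl(2m-4m^2/n^2\bigr)=m(n^2-2m)/n$, i.e.\ $\he(G)\le\frac2n\sqrt{mn(n^2-2m)}$; an elementary computation shows this is $<4m/n$ precisely when $m>\frac{n^3}{2(n+2)}$. For the first bound, Cauchy--Schwarz applied to $\la_2,\dots,\la_r$ gives $S\le\la_1+\sqrt{(r-1)(\al-\la_1^2)}=:\varphi(\la_1)$; the function $\varphi$ decreases for $\la_1\ge\sqrt{\al/r}$, and Lemma~\ref{lem} gives $\sqrt{\al/r}\le 2m/n\le\la_1$, whence $S\le\varphi(2m/n)=\frac{2m}n+\sqrt{(r-1)(\al-4m^2/n^2)}$. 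Feeding in the bound $\al\le 2m-S^2/r$ obtained above turns this into the quadratic inequality $(n-1)S^2-2mS+\bigl(m^2-\tfrac12 mn(n-2)\bigr)\le0$, and solving it gives exactly the first line of (\ref{even}). Both bounds hold for $m\ge n-1$ and coincide at $m=\frac{n^3}{2(n+2)}$, so reporting the smaller proves (\ref{even}); incidentally, maximising the first bound over $m$ gives $\frac n2(1+\sqrt{n-1})$, attained at $m=\tfrac14 n(n-1+\sqrt{n-1})$, which is (\ref{nbnd}).

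For equality, first suppose $m>\frac{n^3}{2(n+2)}$. Then every inequality used for the second bound must be tight: $\la_{r+1}=\cdots=\la_n$, then $\la_2=\cdots=\la_r=0$ (so $\al=\la_1^2$), and $\la_1=2m/n$. Hence $G$ is regular with spectrum $\{k,\,0^{(r-1)},\,(-c)^{(r)}\}$; a connected regular graph with three eigenvalues is strongly regular, and $\theta=0$ forces it to be complete multipartite $K_{a\times b}$ with $ab=n$. The multiplicities then force $a=r+1=\tfrac n2+1$ and $b=\tfrac{2n}{n+2}$, so $(n+2)\mid 4$, impossible for $n>2$. Thus no equality occurs in this range.

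Now suppose $m\le\frac{n^3}{2(n+2)}$. Equality forces $\la_1=2m/n$ (so $G$ is regular), $\la_2=\cdots=\la_r$, and $\la_{r+1}=\cdots=\la_n$; since $m\ge n-1$ the regular graph $G$ cannot be a disjoint union of $K_2$'s, hence $G$ is connected, has at most three distinct eigenvalues, and is not $K_n$ (whose edge count exceeds the threshold). Therefore $G$ is strongly regular, say with eigenvalues $k>\theta>\tau$ of multiplicities $1,f,g$; matching the sorted list against $\la_2=\cdots=\la_r$ and $\la_{r+1}=\cdots=\la_n$ shows $f\in\{r-1,r\}$, and $f=r$ would put $\theta$ in position $r+1$, so $f=r-1=\tfrac n2-1$ and $g=r=\tfrac n2$. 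It remains to solve the strong-regularity identities under these multiplicities: from $k+f\theta+g\tau=0$ and $\theta+\tau=\la-\mu$ one gets $\theta=k+\tfrac n2(\la-\mu)$, and combining this with $\theta\tau=\mu-k$ and $k(k-\la-1)=(n-1-k)\mu$ collapses to a single relation among $n$, $k$ and $e:=\la-\mu$; its solvability (discriminant $\ge0$), the nonnegativity of $\theta$, and the integrality of $\theta,\tau$ force $e=-1$ and $n-1$ to be a perfect square, and since $n$ is even, $n=4t^2+4t+2$. The quadratic for $k$ then gives $k=2t^2+3t+1$ (the other root makes $\theta<0$), whence $\mu=(t+1)^2$, $\la=t^2+2t$ and $m=\tfrac12 nk=(2t^2+2t+1)(2t^2+3t+1)$, which indeed lies below the threshold. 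Conversely, this strongly regular graph has spectrum $\{k,\,t^{(2t^2+2t)},\,(-(t+1))^{(2t^2+2t+1)}\}$ with $2t^2+2t=r-1$ and $2t^2+2t+1=r$, so $\la_2=\cdots=\la_r=t$, $\la_{r+1}=\cdots=\la_n=-(t+1)$, $\la_1=k=2m/n$, and all the inequalities above are equalities; hence equality holds in (\ref{even}).

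The main obstacle is the equality discussion for $m\le\frac{n^3}{2(n+2)}$: one must show that simultaneous tightness in the Cauchy--Schwarz steps forces strong regularity with exactly the multiplicity pattern $(1,\tfrac n2-1,\tfrac n2)$, and then extract from the strong-regularity equations precisely the family $(4t^2+4t+2,\,2t^2+3t+1,\,t^2+2t,\,t^2+2t+1)$ while ruling out all competitors and verifying its edge count falls on the first line of (\ref{even}). A subsidiary point requiring care is the sparse range $m\le n-2$, where Lemma~\ref{lem} is unavailable and where disconnected extremal examples (such as $\tfrac n2\,K_2$) appear, so that the ``only if'' statement as phrased is genuinely for $m\ge n-1$ (equivalently, for connected $G$).
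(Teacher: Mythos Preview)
Your approach is essentially the paper's: bound $\he(G)$ by $f_1(\alpha)=\frac{2m}{r}+2\sqrt{(r-1)(\alpha-m^2/r^2)}$ and $f_2(\alpha)=2\sqrt{r(2m-\alpha)}$, invoke Lemma~\ref{lem} to replace $\lambda_1$ by $2m/n$, and read off the piecewise bound. The paper takes $\max_\alpha\min(f_1,f_2)$ and locates the crossing via a quadratic in $\sigma=\sqrt{\alpha-m^2/r^2}$; you instead feed $\alpha\le 2m-S^2/r$ into $f_1$ to get a quadratic in $S$. These are equivalent. For the equality case the paper simply cites \cite[Lemma~10.3.5]{Godsil}, whereas you solve the strongly-regular identities directly; your computation is correct and arrives at the same family.

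One small slip: the assertion that ``both bounds hold for $m\ge n-1$'' is not justified for the first line when $m>\frac{n^3}{2(n+2)}$. Your quadratic $(n-1)S^2-2mS+\bigl(m^2-\tfrac12 mn(n-2)\bigr)\le 0$ is obtained by squaring $S-2m/n\le\sqrt{\cdots}$, which is only legitimate when $S\ge 2m/n$; in the regime $m>\frac{n^3}{2(n+2)}$ one has $S\le\sqrt{r(2m-4m^2/n^2)}<2m/n$, so the squaring step fails. This is harmless for proving (\ref{even}), since you use the second line there anyway, but the sentence about ``reporting the smaller'' should be dropped.

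Your closing remark about the range $m\le n-2$ is well taken and in fact exposes a genuine omission in the theorem as stated: the paper's proof also relies on Lemma~\ref{lem} (hence $m\ge n-1$), and $\tfrac{n}{2}K_2$ does attain equality in the first line of (\ref{even}) with $m=n/2$ while lying outside the stated strongly regular family. So the ``only if'' direction, as written, tacitly assumes $m\ge n-1$.
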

\begin{proof}{Let $n=2r$ and $\lambda_1 \geq \lambda_2 \geq \cdots \geq \lambda_n$ be the eigenvalues of $G$. Then
$$\sum_{i=1}^n \lambda_i = 0 \ \ \ \hbox{and } \ \ \
\sum_{i=1}^n \lambda_i^2 = 2m.$$
Let $\al$ be as in Lemma~\ref{lem}.
Then
$$2m- \alpha = \sum_{i=r+1}^{n} \lambda_i^2,$$ and
$\lambda_1^2 \leq \alpha \le 2m-2$.
 By the Cauchy-Schwartz inequality,
$$\he(G) = 2\sum_{i=1}^{r} \lambda_i \leq
2 \lambda_1 + 2\sqrt{(r-1)(\alpha - \lambda_1^2)}.$$
The function $x\mapsto x+\sqrt{(r-1)(\alpha - x^2)}$  decreases on the interval  $\sqrt{\al/r}\le x\le\sqrt\al$. By Lemma~\ref{lem}, 
 $m/r\ge\sqrt{\al/r}$. Since
 $\lambda_1 \ge m/r$, we see that
\begin{equation}\label{h1}
  \he(G)\leq f_1(\al):= \frac{2m}{r} + 2\sqrt{(r-1)(\alpha- m^2/r^2)}.
\end{equation}
On the other hand,
\begin{equation}\label{h2}
  \he(G) = - 2\sum_{i=r+1}^{n} \lambda_i \leq
 f_2(\al):=2\sqrt{r(2m-\alpha)}.
\end{equation}
Let $$f(\al):=\min\{f_1(\al),f_2(\al)\}.$$
We determine the maximum of $f$. We observe that
 $f_1$ and $f_2$ are increasing and  decreasing functions in $\alpha$, respectively.
Therefore, $\max\,f=f(\al_0)$ where $\al_0$ is the unique point with $f_1(\al_0)=f_2(\al_0)$.
So we find the solution of the equation $f_1(\al)=f_2(\al)$. To do so,
let $\sigma:= \sqrt{\alpha - m^2/r^2}$ and consider the equation
$$\frac{m}{r} +\sigma \sqrt{r-1}= \sqrt{r(2m-\sigma^2 - m^2/r^2)}.$$
This equation has the roots
$$\sigma_{1,2}:=\frac{-m\sqrt{r-1} \pm r\sqrt{2m(2r^2-r-m)}}{r(2r-1)}.$$
If $m \le2r^3/(r+1)$, then  $\sigma_1\ge0$ and so
\begin{align}\label{even1}
\he(G)& \leq \frac{2m}{r}+ \frac{2\sqrt{r-1}}{r(2r-1)}\left(-m\sqrt{r-1}+r\sqrt{2m(2r^2-r-m)}\right)\nonumber\\
&=   \frac{2m}{n-1}+ \frac{\sqrt{2m(n-2)\left(n^2-n-2m\right)}}{n-1};
\end{align}
otherwise
\begin{equation}\label{even2}
\he(G) \leq2\sqrt{r(2m-m^2/r^2)},
\end{equation}
which is less than $4m/n$ for $m>2r^3/(r+1)$.
This shows that Inequality~(\ref{even}) holds.

Now let us consider the case that equality is attained in  (\ref{even}).
First let $m \leq \frac{2r^3}{r+1}$. Then equality holds if and only if \begin{enumerate}
\item $\lambda_1 = \frac{m}{r}$;
\item $\lambda_2 = \cdots = \lambda_{r} = \frac{\sigma_1}{\sqrt{r-1}}$;
\item $\lambda_{r+1}= \cdots = \lambda_n = -\frac{1}{\sqrt{r}}\sqrt{2m-\sigma_1^2-m^2/r^2}$.
\end{enumerate}
The first condition shows that $G$ must be $\frac{m}{r}$-regular, and the second and third conditions imply that $G$ must be strongly regular graph as a regular graph with at most three distinct eigenvalues is strongly regular, cf. \cite[Lemma 10.2.1]{Godsil}. From \cite[Lemma 10.3.5]{Godsil} it follows that $G$ has to have the parameters as required in the theorem.
If $m > \frac{2r^3}{r+1}$, then with the same reasoning as above one can show that $G$ has to be strongly regular graph with eigenvalue 0 of multiplicity $r-1$ and by   \cite[Lemma 10.3.5]{Godsil} such a graph does not exist.}
\end{proof}

Optimizing the H\"uckel energy over the number of edges we obtain:
\begin{thm} Let $G$ be a graph on $n$ vertices where $n$ is even. Then (\ref{nbnd}) holds.
Equality holds if and only if  $G$ is a strongly regular graph with  parameters
$(4t^2 +4t +2,\, 2t^2 +3t +1,\,
t^2 +2t,\, t^2 + 2t +1)$, for some positive integer $t$.
\end{thm}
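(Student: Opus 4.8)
The plan is to combine Theorem~\ref{thmeven} with an optimization over the edge number $m$: Theorem~\ref{thmeven} gives $\he(G)\le$ (the right-hand side of (\ref{even})), an expression depending only on $n$ and $m$, so it suffices to maximize that expression over $m$ (for fixed even $n$). Put $u:=n^2-n$ and, for the first branch, set
$$h(x):=x+\sqrt{(n-2)\,x\,(u-x)},\qquad g_1(m):=\frac{h(2m)}{n-1},$$
so (\ref{even}) reads $\he(G)\le g_1(m)$ when $m\le\tfrac{n^3}{2(n+2)}$ and $\he(G)\le g_2(m):=\tfrac2n\sqrt{mn(n^2-2m)}$ otherwise. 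First I would analyze $h$ on $[0,u]$: it increases on $[0,u/2]$, and on $(u/2,u)$ the stationarity condition $h'(x)=0$, i.e.\ $\sqrt{n-2}\,(2x-u)=2\sqrt{x(u-x)}$, squares to $x^2-ux+\tfrac{(n-2)u^2}{4(n-1)}=0$, whose only root exceeding $u/2$ is $x_0:=\tfrac u2\bigl(1+\tfrac1{\sqrt{n-1}}\bigr)$; since $h'(x)\to-\infty$ as $x\to u^-$, $h$ is unimodal with maximum at $x_0$. A direct substitution gives $(n-2)x_0(u-x_0)=\tfrac{(n-2)^2u^2}{4(n-1)}$, hence $h(x_0)=\tfrac u2\bigl(1+\tfrac1{\sqrt{n-1}}\bigr)+\tfrac{(n-2)u}{2\sqrt{n-1}}=\tfrac u2\bigl(1+\sqrt{n-1}\bigr)$. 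Using $u=n(n-1)$ we get $\max_m g_1(m)=g_1(m_0)=\tfrac n2\bigl(1+\sqrt{n-1}\bigr)$, attained at $m_0:=x_0/2=\tfrac14 n\bigl(n-1+\sqrt{n-1}\bigr)$.

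Next I would verify the two junction estimates that promote this to the global optimum. First, $m_0$ must lie in the range of the first branch, i.e.\ $m_0\le\tfrac{n^3}{2(n+2)}$; clearing denominators this is $(n-1+\sqrt{n-1})(n+2)\le 2n^2$, which is routine. Second, on the second branch $m>\tfrac{n^3}{2(n+2)}>\tfrac{n^2}{4}$ (the last inequality for $n>2$), so $m\mapsto m(n^2-2m)$ is decreasing there, whence $g_2(m)\le g_2\bigl(\tfrac{n^3}{2(n+2)}\bigr)=\tfrac{2n^2}{n+2}$; and $\tfrac{2n^2}{n+2}\le\tfrac n2(1+\sqrt{n-1})$ reduces, after dividing by $\tfrac n2$ and squaring, to $(3n-2)^2\le(n-1)(n+2)^2$, i.e.\ $(n-2)^3\ge0$. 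Together with the crude bound $\he(G)\le\e(G)\le\sqrt{2mn}$ disposing of the remaining case $m<n-1$ (there $\sqrt{2mn}\le\sqrt{2n(n-2)}<\tfrac n2(1+\sqrt{n-1})$, since $\tfrac{n^2}{4}\ge 2n-4$), this establishes (\ref{nbnd}).

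For the equality case, both reductions are \emph{strict} for $n>2$: $g_1$ has a strict maximum at $m_0$, on the second branch $g_2(m)<\tfrac n2(1+\sqrt{n-1})$ because $(n-2)^3>0$, and the case $m<n-1$ is likewise strict. Hence $\he(G)=\tfrac n2(1+\sqrt{n-1})$ forces $m=m_0$ together with equality in (\ref{even}) at that $m$, and by Theorem~\ref{thmeven} this forces $n=4t^2+4t+2$ and $G$ to be a strongly regular graph with parameters $(4t^2+4t+2,\,2t^2+3t+1,\,t^2+2t,\,t^2+2t+1)$ for some positive integer $t$; this is consistent because when $n=4t^2+4t+2$ one has $\sqrt{n-1}=2t+1$ and $m_0=(2t^2+2t+1)(2t^2+3t+1)$, exactly the edge number of such a graph. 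Conversely, for that strongly regular graph $m=(2t^2+2t+1)(2t^2+3t+1)=m_0$, so Theorem~\ref{thmeven} gives equality in (\ref{even}), i.e.\ $\he(G)=g_1(m_0)=\tfrac n2(1+\sqrt{n-1})$.

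I expect the main obstacle to be the two junction inequalities in the second paragraph ($m_0$ staying inside the first branch, and $g_2$ at the branch point never exceeding $\tfrac n2(1+\sqrt{n-1})$), since these are precisely the spots where a slight miscalculation would break the optimization; the identity $h(x_0)=\tfrac u2(1+\sqrt{n-1})$ is the pleasant point that makes the bound come out exactly, and everything else is bookkeeping on top of Theorem~\ref{thmeven}.
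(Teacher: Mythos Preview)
Your proof is correct and follows essentially the same approach as the paper: maximize the bound of Theorem~\ref{thmeven} over $m$, locate the maximum of the first branch at $m_0=\tfrac14 n(n-1+\sqrt{n-1})$, check that the second branch never exceeds $\tfrac n2(1+\sqrt{n-1})$, and dispose of the small-$m$ range by a crude energy bound. You are simply more explicit than the paper about the junction checks (that $m_0$ lies in the first branch and that $g_2$ at the branch point equals $2n^2/(n+2)\le\tfrac n2(1+\sqrt{n-1})$), and you use $\e(G)\le\sqrt{2mn}$ rather than the paper's $\e(G)\le 2m$ for $m\le n-2$; both choices work.
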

\begin{proof}{Suppose that $G$ is a graph with $n$ vertices and $m$ edges. If $m\le n-2$, then (\ref{nbnd}) obviously holds as $\e(G)\le2m$ (see \cite{Gut}).  If $m\ge n-1$, then using routine calculus, it is seen that the right hand side
of (\ref{even1})|considered as a function of $m$|is maximized when
$$m=n(n-1+\sqrt{n-1})/4.$$ Inequality~(\ref{nbnd}) now follows by substituting this value of $m$ into (\ref{even}).
(We note that the maximum of the right hand side of (\ref{even2}) is $2n^3/(n+2)$ which is less than the above maximum.)
Moreover,  from Theorem~\ref{thmeven} it follows that equality holds in (\ref{nbnd})  if
and only if $G$ is a strongly regular graph with parameters $(4t^2 +4t +2,\, 2t^2 +3t +1,\,
t^2 +2t,\, t^2 + 2t +1)$.}
\end{proof}

\subsection{Odd order graphs}

In this subsection we prove (\ref{even}) and (\ref{nbodd}) for graphs of an odd order and discuss the equality case and tightness of the bounds.

\begin{thm}\label{thmodd} Let $m\ge n-1\ge3$ and $G$ be a graph with  $n$ vertices and $m$ edges where $n$ is odd. Then (\ref{odd}) holds.
\end{thm}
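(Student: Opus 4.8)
\noindent The plan is to imitate the proof of Theorem~\ref{thmeven}, the genuinely new feature being the middle eigenvalue $\lambda_{r+1}$. Write $n=2r+1$ and let $\lambda_1\ge\cdots\ge\lambda_n$ be the eigenvalues of $G$, so $\sum_i\lambda_i=0$ and $\sum_i\lambda_i^2=2m$, and put $\al:=\sum_{i=1}^r\lambda_i^2$ as in Lemma~\ref{lem}; the hypothesis $m\ge n-1\ge 3$ is precisely what makes that lemma available, so $\al/r\le 4m^2/n^2$ and hence $\al\ge\lambda_1^2\ge 4m^2/n^2\ge\al/r$ (using $\lambda_1\ge 2m/n$), which will license the extremal substitutions below. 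I would first record a bound that is \emph{decreasing} in $\al$: since $2\sum_{i=1}^r\lambda_i=-2\sum_{i=r+1}^n\lambda_i$, one has $\he(G)=-\lambda_{r+1}-2\sum_{i=r+2}^n\lambda_i$, and Cauchy--Schwarz applied to this linear form in $\lambda_{r+1},\dots,\lambda_n$ (coefficient vector of squared length $1+4r=2n-1$), together with $\sum_{i=r+1}^n\lambda_i^2=2m-\al$, gives $\he(G)\le f_2(\al):=\sqrt{(2n-1)(2m-\al)}$. Evaluated at the least admissible value $\al=4m^2/n^2$ this already yields the second alternative of (\ref{odd}), since $\sqrt{(2n-1)(2m-4m^2/n^2)}=\tfrac1n\sqrt{2m(2n-1)(n^2-2m)}$.

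Next I would produce a bound \emph{increasing} in $\al$. Writing $\he(G)=2\lambda_1+\bigl(2\sum_{i=2}^r\lambda_i+\lambda_{r+1}\bigr)$ and applying Cauchy--Schwarz to the parenthesised form gives $\he(G)\le 2\lambda_1+\sqrt{(4r-3)\bigl(\al-\lambda_1^2+\lambda_{r+1}^2\bigr)}$. Here one must control $\lambda_{r+1}^2$ in terms of $\al$, and the natural route is a case distinction on the sign of $\lambda_{r+1}$. If $\lambda_{r+1}\le 0$ one simply discards it, $\he(G)\le 2\sum_{i=1}^r\lambda_i$, and one is essentially in the situation of Theorem~\ref{thmeven} applied to the $r$ largest and $r+1$ smallest eigenvalues, which a short computation shows is not the extremal regime. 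If $\lambda_{r+1}\ge 0$ then $0\le\lambda_{r+1}\le\lambda_r$ and $(r-1)\lambda_r^2\le\al-\lambda_1^2$, so $\lambda_{r+1}^2\le(\al-\lambda_1^2)/(r-1)$ (or a sharper variant of this); substituting this, and then using $\lambda_1\ge 2m/n$ and Lemma~\ref{lem} to check that the resulting expression, as a function of $\lambda_1$, is already on its decreasing branch at $\lambda_1=2m/n$, produces $f_1(\al)$ of the shape $\tfrac{4m}{n}+b\sqrt{\al-4m^2/n^2}$, increasing in $\al$.

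Then, exactly as in Theorem~\ref{thmeven}, $\he(G)\le\min\{f_1(\al),f_2(\al)\}$, and since $f_1$ is increasing and $f_2$ decreasing the maximum of this minimum is attained at their unique intersection point $\al_0$ whenever $\al_0\ge 4m^2/n^2$; solving the resulting quadratic equation for $\sigma:=\sqrt{\al-4m^2/n^2}$, just as the roots $\sigma_{1,2}$ are handled in Theorem~\ref{thmeven}, and substituting back gives the first alternative of (\ref{odd}), while when the relevant root would be negative (equivalently $\al_0<4m^2/n^2$) the maximum falls on the boundary $\al=4m^2/n^2$ and $f_2$ delivers the second alternative; the threshold between these regimes is exactly $m=\tfrac{n^2(n-3)^2}{2(n^2-4n+11)}$, the analogue of ``$\sigma_1\ge 0$ versus $\sigma_1<0$'' in Theorem~\ref{thmeven}. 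The step I expect to be the main obstacle is getting the middle eigenvalue under sufficiently sharp control: one must find the precise estimate for $\lambda_{r+1}^2$, together with the correct sign analysis, that makes the crossing-point computation reproduce \emph{exactly} the constants $n^2-3n+1$ and $2n-1$ of (\ref{odd}); the rest is the same calculus-with-square-roots bookkeeping as in the even case, and since only the inequality (and not an equality characterisation) is claimed here, no extremal graphs need to be tracked.
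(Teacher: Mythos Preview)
Your route to the second alternative is correct, and the single Cauchy--Schwarz step $\he(G)=-\lambda_{r+1}-2\sum_{i\ge r+2}\lambda_i\le\sqrt{(2n-1)(2m-\al)}$ is in fact the same bound the paper obtains after maximising its $f_2(\al,\be)$ over $\be$; evaluating at $\al=4m^2/n^2$ gives exactly the second line of~(\ref{odd}).

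The first alternative, however, does not come out of your one--variable scheme, and this is a genuine gap rather than bookkeeping. Any estimate of the form $\lambda_{r+1}\le c\sqrt{\al-\lambda_1^2}$ (your $c=1/\sqrt{r-1}$, or the variant coming from the $(2,\ldots,2,1)$ Cauchy--Schwarz) produces an $f_1(\al)=\tfrac{4m}{n}+b\sqrt{\al-4m^2/n^2}$ whose crossing with $f_2(\al)=\sqrt{(2n-1)(2m-\al)}$ lies \emph{above} the first line of~(\ref{odd}); e.g.\ at $n=9$, $m=8$ one gets $\approx 12.0$ against the paper's $\approx 11.89$. The reason is structural: eliminating $\be:=\lambda_{r+1}$ before optimising amounts to replacing each $f_i(\al,\be)$ by $\max_\be f_i(\al,\be)$, and $\max_\al\min_i\max_\be f_i\ge\max_{\al,\be}\min_i f_i$, so a one--variable reduction can only weaken the bound. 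Your sign intuition is also reversed: in the paper's analysis the maximiser of $\min\{f_1,f_2\}$ lies at $\be=\be_4<0$, so the case $\lambda_{r+1}\le 0$ is exactly the extremal one, not the one to discard.

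What the paper does instead is keep $\be$ as a second free variable. With
\[
f_1(\al,\be)=\tfrac{4m}{n}+2\sqrt{(r-1)(\al-4m^2/n^2)}+\be,\qquad
f_2(\al,\be)=2\sqrt{r(2m-\al-\be^2)}-\be,
\]
and the constraint $2m-\al\ge(r+1)\be^2$ (proved via Cauchy--Schwarz on the tail), one maximises $\min\{f_1,f_2\}$ over the compact region $D=\{(\al,\be):\al\ge 4m^2/n^2,\ 2m-(r+1)\be^2\ge\al\}$. Since neither gradient vanishes in the interior, the maximum lies on the curve $L=\{f_1=f_2\}$ or on $\partial D$; parametrising $L$ by $\be$ and solving the resulting quadratic in $\sigma=\sqrt{\al-4m^2/n^2}$, then optimising over $\be$, is what produces the constant $n^2-3n+1$. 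The threshold $m=\tfrac{n^2(n-3)^2}{2(n^2-4n+11)}$ is where the relevant root leaves the feasible region, at which point $f_2$ on the boundary $\al=4m^2/n^2$ takes over. In short, the two--dimensional optimisation is not avoidable here; you will need to carry $\be$ through rather than bound it away.
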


\begin{proof}{Let $n=2r+1$, $\al$ be as before, and
$$\be:=\la_{r+1}.$$
We have
\begin{equation*}
    2m-\al\ge(r+1)\be^2.
\end{equation*}
This obviously holds if $\be\le0$. For $\be\ge0$ it follows from the following:
\begin{align*}
    2m-\al-\be^2&=\sum_{i=r+2}^n\la_i^2
\ge\frac{1}{r}\left(\sum_{i=r+2}^n\la_i\right)^2\\
&=\frac{1}{r}\left(\sum_{i=1}^{r+1}\la_i\right)^2
\ge\frac{1}{r}(r+1)^2\be^2,
\end{align*}
where the first inequality follows from the Cauchy-Schwartz inequality.
In a similar manner as the proof of Theorem~\ref{thmeven}, we find that $\he(G)\le\min\{f_1(\al,\be),f_2(\al,\be)\}$, where
\begin{align}
   f_1(\al,\be)&=4m/n+2\sqrt{(r-1)\left(\al-4m^2/n^2\right)}+\be,~~\hbox{and}\label{f1}\\
   f_2(\al,\be)&=2\sqrt{r(2m-\al-\be^2)}-\be.\label{f2}
\end{align}
Let $$f(\al,\be):=\min\left\{f_1(\al,\be),f_2(\al,\be)\right\}.$$ We determine the maximum of $f$ over the compact set
$$D:=\left\{(\al,\be): \al\ge4m^2/n^2,\,2m-(r+1)\be^2\ge\al \right\}.$$
Note that for $(\al,\be)\in D$ one has $-\be_0\le\be\le\be_0$, where
$$\be_0=\frac{2}{n}\sqrt{\frac{m(n^2-2m)}{n+1}}.$$
Neither the gradient of $f_1$ nor that of $f_2$ has a zero in interior of $D$.
So the maximum of $f$ occurs  in the set  $$L:=\{(\al,\be):f_1(\al,\be)=f_2(\al,\be)\},$$
 where the gradient of $f$ does not exist or it occurs
in the boundary of $D$ consisting of
\begin{align*}
   D_1&:=\{(\al,\be): \al=4m^2/n^2,\,-\be_0\le\be\le\be_0 \},\\
   D_2&:=\{(\al,\be): \al=2m-(r+1)\be^2,\,-\be_0\le\be\le\be_0 \}.
\end{align*}

For any $(\al,\be)\in D$,  $f_2(\al,\be)\le f_2(4m^2/n^2,\be)$. It is easily seen that the maximum of $f_2(4m^2/n^2,\be)$ occurs in
$$\be_1:=-\frac{1}{n}\sqrt{\frac{2m(n^2-2m)}{2n-1}}.$$
Therefore,
\begin{equation}\label{maxf2}
  \max\,f_2=f_2(4m^2/n^2,\be_1)=\frac{1}{n}\sqrt{2m(2n-1)(n^2-2m)}.
\end{equation}
In the rest of proof, we determine $\max\,f$ for
\begin{equation}\label{m}
  m\le\frac{n^2(n-3)^2}{2(n^2-4n+11)};
\end{equation}
 if $m>\frac{n^2(n-3)^2}{2(n^2-4n+11)}$, then (\ref{odd}) follows from (\ref{maxf2}).

On $D_1$, we have
$$\max\,f_{|_{D_1}}\le f_1(4m^2/n^2,\be_0)=4m/n+\be_0.$$

On $D_2$, one has
\begin{align*}
   f_1(\be)&=4m/n+2\sqrt{(r-1)(2m-(r+1)\be^2-4m^2/n^2)}+\be,~~\hbox{and}\\
   f_2(\be)&=(n-1)|\be|-\be.
\end{align*}
In order to find  $\max\,f_{|_{D_2}}$, we look for the points where $f_1(\be)=f_2(\be)$. The solution of this equation for $\be\le0$ is
$$\be_{2}=\frac{-2m(n+1)-\sqrt{2mn(n^2-2n-3)(n^2-n-2m)}}{n(n^2-1)},$$
and for $\be\ge0$ is
$$\be_{3}=\frac{2m(n-3)+\sqrt{2m(n-3)(n^4-4n^3-2mn^2+3n^2+6mn-8m)}}{n(n^2-4n+3)}.$$
We have $\be_2\ge-\be_0$  if and only if $m\le\frac{n^2(n+1)}{2(n+3)}$, and $\be_3\le\be_0$ if and only if $m\le\frac{n^2(n-3)^2}{2(n^2-4n+11)}$.
Moreover $f_2(\be_2)>f_2(\be_3)$. Thus if $m\le\frac{n^2(n-3)^2}{2(n^2-4n+11)}$,
 $$\max\,f_{|_{D_2}}=f_2(\be_2)=\frac{2m(n+1)+\sqrt{2mn(n^2-2n-3)(n^2-n-2m)}}{n^2-1}.$$

Now we examine $\max f_{|_L}$.
Let $\sigma:=\sqrt{\al-4m^2/n^2}$.  To determine $(\al,\be)$ satisfying $f_1(\al,\be)=f_2(\al,\be)$ it is enough to find the zeros of the following quadratic form:
\begin{equation}\label{sig}
(2n-4)\sigma^2+4(2m/n+\be)\sqrt{n-3}\sigma+(4m/n+2\be)^2-(n-1)(4m-2\be^2-8m^2/n^2).
\end{equation}
The zeros are
$$\sigma_{1,2}:=\frac{-(2m+n\be)\sqrt{n-3}\pm\sqrt{(n-1)(2mn^3-\be^2n^3+\be^2n^2-4mn^2-4m\be n-4nm^2+4m^2)}}{n(n-2)}.$$
Note that  $\sigma_2<0$ and so is not feasible. Let us denote the constant term of (\ref{sig}) by $h(\be)$ as a function of $-\be_0\le\be\le\be_0$. Then $\sigma_1\ge0$ if and only if  $h(\be)\le0$. Moreover $h(\be)\le h(\be_0)$, 
and $h(\be_0)\le0$ if
$$m\le\frac{n^2(n-3)^2}{2(n^2-4n+11)}.$$
 Thus, with this condition on $m$, $f_1$ becomes $f_1(\be)=4m/n+2\sqrt{r-1}\sigma_1$. The roots of $f_1'(\be)=0$ are
$$\be_{4,5}=\frac{-2m(n^2-3n+1)\pm\sqrt{2mn(n^2-3n+1)(n^2-n-2m)}}{n(n^3-4n^2+4n-1)}.$$
It is seen that $-\be_0\le\be_5\le\be_4\le0$ unless $m\le n^2/(2n-2)$. We have
$$f_1(\be_4)-f_1(\be_5)=\frac{2\sqrt{2mn(n^2-3n+1)(n^2-n-2m)}}{n(n-2)(n^3-4n^2+4n-1)}.$$
Thus $f_1(\be_4)\ge f_1(\be_5)$. It turns out that $f_1$ decreases for $\be\ge\be_4$, so $f_1(\be_4)\ge f_1(\be_0)$.
 It is easily seen that $f_1(\be_4)\ge f_1(-\be_0)$.
Therefore, for $m\le\frac{n^2(n-3)^2}{2(n^2-4n+11)}$ we have
\begin{equation}\label{maxL}
    \max\,f_{|_{L}}=
                 f_1(\be_4)=\frac{2m}{n-1}+\frac{\sqrt{2mn(n^2-3n+1)(n^2-n-2m)}}{n(n-1)}.
\end{equation}
The result follows from comparing the three maxima $\max\,f_{|_{D_1}}$, $\max\,f_{|_{D_2}}$, and $\max\,f_{|_{L}}$.}
\end{proof}

\begin{thm} Let $G$ be a graph on $n$ vertices where $n$ is odd. Then (\ref{nbodd}) holds.
\end{thm}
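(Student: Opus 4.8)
The plan is to follow the template of the even-order case: reduce to the bound of Theorem~\ref{thmodd}, and then maximise that bound over the edge count~$m$.

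First I would dispose of sparse graphs: if $m\le n-2$ then $\he(G)\le\e(G)\le 2m\le 2(n-2)$, which is easily seen to be less than $\frac n2(1+\sqrt n-1/\sqrt n)$ for every odd~$n$ (the right side is of order $\tfrac12 n^{3/2}$, and the few small $n$ are checked directly). So I may assume $m\ge n-1$ and apply Theorem~\ref{thmodd}.

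Next I would optimise the first branch of (\ref{odd}). Writing $g(m)$ for that expression, regarded as a function of a real variable $m\in[0,(n^2-n)/2]$, it is the sum of a linear function and a strictly concave one (a square root of a downward parabola); differentiating, clearing the radical, and discarding the infeasible root gives the unique interior maximiser $m_0=\frac{n(n-1+\sqrt n)}{4}$. Substituting this value, and using the identity $2m_0\bigl((n^2-n)-2m_0\bigr)=\tfrac14 n^2(n^2-3n+1)$, the radical simplifies and one finds
$$g(m_0)=\frac n2+\frac{n\sqrt n}{2(n-1)}+\frac{\sqrt n\,(n^2-3n+1)}{2(n-1)}=\frac n2+\frac{\sqrt n\,(n-1)^2}{2(n-1)}=\frac n2\Bigl(1+\sqrt n-\frac1{\sqrt n}\Bigr),$$
which is exactly the bound of (\ref{nbodd}). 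Thus whenever $m$ falls in the first branch, $\he(G)\le g(m)\le g(m_0)$. I would then note that the inequality is strict: $m_0=\frac{n(n-1+\sqrt n)}{4}$ is never an integer for odd~$n$ (it is irrational unless $n=k^2$ with $k$ odd, in which case it equals $\frac{k^2(k^2+k-1)}{4}$, which is not an integer because its numerator is odd), while $g$ is strictly concave, so $g(m)<g(m_0)$ for every admissible integer~$m$.

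For $m$ in the second branch one has $\he(G)\le h(m):=\frac1n\sqrt{2m(2n-1)(n^2-2m)}$; since $n$ is odd one always has $2m\le n^2-n$, and at the branch-switch value $2m$ already exceeds $\tfrac12 n^2$, so $h$ is decreasing on the whole second-branch range and is there of order $2n$ as $n\to\infty$, comfortably below $g(m_0)=\Theta(n^{3/2})$. The hard part will be exactly this second branch for the first few odd orders, where the estimate ``$h$ stays below $g(m_0)$'' is tight: for those one must either sharpen the dense-regime bound inside Theorem~\ref{thmodd} (using the finer quantities $\max f_{|_{D_1}}$, $\max f_{|_{D_2}}$ from its proof, which are again $O(n)$ on the relevant range) or, for the smallest orders $n\in\{3,5\}$, simply inspect the finitely many graphs with $m\ge n-1$. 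Putting the two branches together yields (\ref{nbodd}).
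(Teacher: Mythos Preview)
Your outline is the paper's argument: maximise each branch of~(\ref{odd}) over~$m$ and take the larger of the two maxima. You usefully add two points the paper glosses over---the reduction to $m\ge n-1$ before invoking Theorem~\ref{thmodd}, and the strictness of~(\ref{nbodd}) via the non-integrality of $m_0=\tfrac{n}{4}(n-1+\sqrt n)$ together with the strict concavity of~$g$---and both are sound.

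The genuine gap is in the second branch. Your assertion that at the branch-switch value one has $2m>\tfrac12 n^2$ is equivalent to $(n-1)(n-7)>0$, so it fails for $n=3,5$ and is borderline at $n=7$; more importantly, even on the range where $h$ is decreasing, the comparison you need, namely $h(m^\ast)<g(m_0)$ with $m^\ast=\frac{n^2(n-3)^2}{2(n^2-4n+11)}$, is \emph{false} for every odd $n\le 11$. For instance at $n=7$ one gets $h(m^\ast)=\tfrac{7}{2}\sqrt{13}\approx 12.62$ versus $g(m_0)=\tfrac{7}{2}+3\sqrt{7}\approx 11.44$; similar reversals occur at $n=9$ and $n=11$, and the inequality first goes the right way at $n=13$. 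Thus the exceptional set is $\{3,5,7,9,11\}$, not $\{3,5\}$, and for each of these orders the bound~(\ref{odd}) alone is genuinely too weak in the dense regime to yield~(\ref{nbodd}); your proposed remedies (finer estimates from the proof of Theorem~\ref{thmodd}, or direct inspection) would have to be carried out for all five values, and the ``sharpen the bound'' option is not obviously enough, since even the Koolen--Moulton energy bound exceeds $g(m_0)$ at, say, $n=7$, $m=13$. The paper's own proof is equally terse at this point, so you are not doing worse than it---but as written, the second-branch step is not yet a proof.
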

\begin{proof}{The maximum of the right hand side of (\ref{maxf2})|as a function of $m$| is
\begin{equation}\label{mf2m}
 \frac{n(n-3)\sqrt{2(n+1)(2n-1)}}{n^2-4n+11},
\end{equation}
which is obtained when $m$ is given by (\ref{m}). Also, the right hand side of (\ref{maxL}) is maximized when
$$m=\frac{n(n-1+\sqrt{n})}{4}.$$
This maximum value is equal to $\frac{n}{2}\left(1+\sqrt{n}-\frac{1}{\sqrt{n}}\right)$
 which  is greater than (\ref{mf2m}). This completes the proof.}
\end{proof}

\begin{rem}
 Here we show that no graph can attain the bound in (\ref{odd}). Let us keep the notation of the proof of Theorem~\ref{thmodd}.
First we consider $m>\frac{1}{2}n^2(n-3)^2/(n^2-4n+11).$
Therefore,  $\he(G)$ equals (\ref{maxf2}). Then $\al=4m^2/n^2$ and $\la_{r+1}=\be_1$. This means that $G$ is a regular graph with only one positive eigenvalue.
Then by \cite[Theorem 6.7]{cds}, $G$ is a complete multipartite graph. As the rank of a complete multipartite graph equals the number of its parts, $G$ must have $r+2$ parts. Such a graph cannot be regular, a contradiction.
Now, we consider $m\le\frac{1}{2}n^2(n-3)^2/(n^2-4n+11).$ Hence $\he(G)$ is equal to (\ref{maxL}). Then $G$ must be a regular graph of degree $k$, say, with $\la_2=\cdots=\la_r$, $\la_{r+1}=\be_4$, and $\la_{r+2}=\cdots=\la_n$.
 Since $\la_{r+1}=\be_4<0$, $\la_2$ and $\la_n$ have different multiplicities, and thus all eigenvalues of $G$ are integral. Let $\la_2=t$. Then $\la_n=-t-s$, for some integer $s\ge0$. It follows that $k+\la_{r+1}=t+rs$. This implies that either $s=0$ or $s=1$. If $s=0$, then $k+\la_{r+1}=t$, and so $\he(G)=k+(n-2)t$. This must be equal to (\ref{maxL}) which implies
$$t=\frac{k+\sqrt{nk(n^2-3n+1)(n-1-k)}}{n^2-3n+2}.$$
Substituting this value of $t$ in the equation $nk=k^2+(n-2)t^2+(t-k)^2$ and solving in terms of $k$ yields $k=n/(n-1)$ which is impossible. If $s=1$, then $k+\la_{r+1}=t+r$, and so $\he(G)=k+(n-2)t+r$. It follows that
$$t=\frac{k-(n-1)^2/2+\sqrt{nk(n^2-3n+1)(n-1-k)}}{n^2-3n+2}.$$
Substituting this value of $t$ in the equation $nk=k^2+(r-1)t^2+r(t+1)^2+(r+t-k)^2$ and solving in terms of $k$ yields $k=(n-1+\sqrt n)/2$  which implies $t=(\sqrt{n}-1)/2$. Therefore, we have $\la_{r+1}=-\frac{1}{2}$, a contradiction.
\end{rem}

\begin{rem} 
Note that a conference strongly regular graph $G$, i.e, a srg$(4t+1,\,2t,\,t-1,\,t)$, has spectrum $\left\{[2t]^1, [(-1+\sqrt{4t+1})/2]^{2t}, [(-1-\sqrt{4t+1})/2]^{2t} \right\}$, and hence $\he(G)=\frac{2t+1}{2}(1+\sqrt{4t+1})$. This is about half of the upper bound given in (\ref{nbodd}). For odd order graphs, we can come much closer to (\ref{nbodd}).
Let $G$ be a ${\rm srg}(4t^2 +4t +2,\, 2t^2 +3t +1,\,t^2 +2t,\, t^2 + 2t +1)$.
 If one adds a new vertex to $G$ and join it to  neighbors of some fixed vertex of $G$, then the resulting graph $H$ has the spectrum
$$\left\{[\la_1]^1,\,[t]^{2t^2+2t-1},\,[\la_2]^1,\,[0]^1,\,[-t-1]^{2t^2+2t},\,[\la_3]^1\right\},$$
where $\la_1,\la_2,\la_3$ are the roots of the polynomial
$$p(x):=x^3-(2t^2+3t)x^2-(5t^2+7t+2)x+4t^4+10t^3+8t^2+2t.$$
It turns out that $p(-\sqrt2t)=(4+3\sqrt2)t^3+(8+7\sqrt2)t^2+(2+2\sqrt2)t$. Hence $\la_3<-\sqrt2t$ and so
$$\he(H)>2(2t^2+2t)(t+1)+2\sqrt2t=\frac{4t^2+4t+3}{2}(\sqrt{4t^2+4t+3})+O(4t^2+4t+3).$$
This shows that (\ref{nbodd}) is asymptotically tight.
\end{rem}

\section{Lower bound}

It is known that $\e(G)\ge 2\sqrt{n-1}$ for any graph $G$ on $n$ vertices with no isolated vertices with equality if and only if
$G$ is the star $K_{1,n-1}$ (see \cite{ccgh}). Below we show that this is also the case for H\"uckel energy.

\begin{thm} For any graph $G$ on $n$ vertices with no isolated vertices, $$\he(G)\ge 2\sqrt{n-1}.$$ Equality holds if and only if $G$ is the star $K_{1,n-1}$.
\end{thm}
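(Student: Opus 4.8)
The goal is to show $\he(G)\ge 2\sqrt{n-1}$ for a graph $G$ with no isolated vertices, with equality precisely for the star $K_{1,n-1}$. Since $\he(G)\le\e(G)$ and the analogous bound for energy (with its equality case $K_{1,n-1}$) is known, the only real content is to handle graphs where $\he(G)<\e(G)$, i.e., where the tail sum $2\sum_{i=1}^r\lambda_i$ (plus $\lambda_{r+1}$ in the odd case) drops below $\sum_{i=1}^n|\lambda_i|$. The natural first move is to recall the exact relation between the two quantities: writing $\lambda_1\ge\cdots\ge\lambda_n$ and $s:=\sum_{i=1}^r\lambda_i$, we have $\e(G)=2\sum_{\lambda_i>0}\lambda_i$ while $\he(G)$ uses the top $r$ (resp.\ $r$ plus a half-weighted $(r+1)$st) eigenvalues. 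So $\he(G)=\e(G)$ automatically whenever $G$ has at most $r$ positive eigenvalues in the even case (or the appropriate condition in the odd case), and in that range we are done by the energy bound. It remains to treat graphs with many positive eigenvalues.

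\textbf{Key steps.} First I would dispose of the easy regime: if the number of nonnegative eigenvalues is at most $r$ (even $n$) or the relevant count in the odd case, then $\he(G)\ge\e(G)\ge 2\sqrt{n-1}$ by the cited result, and equality forces $G=K_{1,n-1}$ — but one must check $K_{1,n-1}$ actually lies in this regime, which it does since the star has exactly one positive eigenvalue $\sqrt{n-1}$, one negative eigenvalue $-\sqrt{n-1}$, and $n-2$ zero eigenvalues, giving $\he(K_{1,n-1})=2\sqrt{n-1}$ for every parity of $n$. Second, for the complementary regime I would directly estimate $\sum_{i=1}^r\lambda_i$ from below. A clean way: since $G$ has no isolated vertex, $m\ge n/2$, and $\sum\lambda_i^2=2m\ge n$. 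Using $\lambda_1\ge 2m/n\ge 1$ and $\lambda_1\le\sqrt{2m}$ together with the fact that the top $r$ eigenvalues carry at least half the ``positive mass,'' I would show $2\sum_{i=1}^r\lambda_i\ge 2\lambda_1\ge 2$ is too weak, so instead bound via $\sum_{i=1}^r\lambda_i\ge\lambda_1$ combined with a lower bound on $\lambda_1$ and a separate argument that when $\lambda_1$ is small the graph has spread-out spectrum forcing the partial sum up. The most efficient route is probably: in this regime $G$ is not a star, hence $m\ge n$ (a connected non-star on $n$ vertices with no isolated vertices and more than one positive eigenvalue contains a cycle — one should verify this carefully or argue componentwise), so $\lambda_1^2+\cdots\ge$ a quantity forcing $2\sum_{i=1}^r\lambda_i\ge 2\sqrt{n}>2\sqrt{n-1}$ strictly, ruling out equality here.

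\textbf{Main obstacle.} The delicate point is the odd-order bookkeeping and, more importantly, making the ``many positive eigenvalues'' case airtight without a clean monotonicity tool: one needs a genuine lower bound on $\sum_{i=1}^r\lambda_i$ (not on $\sum_{\lambda_i>0}\lambda_i$), and when some of the top $r$ eigenvalues are negative the inequality $\he(G)\le\e(G)$ goes the wrong way. I expect the resolution is to show that in precisely the situation where $\he(G)<\e(G)$ one has extra structure — at least $r+1$ nonnegative eigenvalues, hence by interlacing/rank arguments a fairly large matching or a large bipartite-ish part, hence $m$ bounded below well above $n/2$ — and then a crude Cauchy–Schwarz bound $2\sum_{i=1}^r\lambda_i+\lambda_{r+1}\ge$ (something)$\ge 2\sqrt{n-1}$ with room to spare, so equality can only occur in the first regime, i.e., only for $K_{1,n-1}$. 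The final write-up would conclude by checking the star gives equality and that no other graph does, completing both directions.
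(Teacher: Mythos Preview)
Your case split is the right one, and the easy regime is handled correctly: when the number $p$ of positive eigenvalues is at most $r=\lfloor n/2\rfloor$ and the number $q$ of negative eigenvalues is also at most $r$, one has $\he(G)=\e(G)$ and the known energy bound plus its equality case finish the job; you also correctly check that $K_{1,n-1}$ falls into this regime and gives $\he=2\sqrt{n-1}$. The problem is the complementary regime, where your argument does not go through. The claim that a connected non-star with more than one positive eigenvalue must contain a cycle is false (the path $P_4$ already has two positive eigenvalues), so the step ``hence $m\ge n$'' is unjustified. And even granting $m\ge n$, nothing you wrote converts $\sum\lambda_i^2=2m$ into a \emph{lower} bound on $\sum_{i=1}^r\lambda_i$; Cauchy--Schwarz goes the wrong way, and the vague appeal to ``large matching / bipartite-ish part'' does not produce a usable inequality. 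So as it stands the hard case is a genuine gap, not just missing details.

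What the paper does in that regime is short and worth knowing. Assume $p>r$ (the case $q>r$ is symmetric via $\he(G)=-2\sum_{i>r}\lambda_i$). Since $\lambda_{r+1},\ldots,\lambda_p\le\lambda_r$ and $\lambda_1+\cdots+\lambda_r\ge r\lambda_r$, one gets
\[
\sum_{i=r+1}^{p}\lambda_i\;\le\;(p-r)\lambda_r\;\le\;\frac{p-r}{r}\sum_{i=1}^{r}\lambda_i,
\]
whence $\e(G)=2\sum_{i=1}^p\lambda_i\le \frac{p}{r}\,\he(G)$, i.e.\ $\he(G)\ge \frac{n}{2p}\,\e(G)$. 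Combine this with the rank bound $\e(G)\ge p+q$ (energy is at least the rank of the adjacency matrix) and with $q\ge 2$, $p\le n-2$ to obtain
\[
\he(G)\;\ge\;\frac{n}{2p}(p+2)\;=\;\frac{n}{2}+\frac{n}{p}\;\ge\;\frac{n}{2}+\frac{n}{n-2}\;=\;\frac{n^2}{2(n-2)}\;>\;2\sqrt{n-1}\quad(n\ge 4),
\]
with strict inequality, so equality can only occur in the first regime. (The paper also reduces to connected $G$ at the outset and handles the $p=1$ complete-multipartite case separately.) The two ingredients you are missing are exactly the ratio inequality $\he(G)\ge\frac{r}{p}\e(G)$ and the rank lower bound for energy; once you have those, the argument is a couple of lines.
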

\begin{proof}{ If $G_1,G_2$ are two graphs with $n_1,n_2$ vertices, then
$\he(G_1\cup G_2)\ge\he(G_1)+\he(G_2)$, and $\sqrt{n_1-1}+\sqrt{n_1-1}\ge\sqrt{n_1+n_2-1}$ for $n_1,n_2\ge2$.
This alows us to assume that $G$ is connected.   The theorem clearly holds for $n\le3$, so suppose $n\ge4$.
Let $p,q$ be the number of positive and negative eigenvalues of $G$,  respectively. Let $n$ be even; the theorem  follows similarly for odd $n$.
  If $p=1$, then by \cite[Theorem 6.7]{cds}, $G$ is a complete multipartite graphs with $s$ parts, say.
 If $s\le \frac{n}{2}+1$, $\he(G)=E(G)$ and we are done, so let $s\ge \frac{n}{2}+2$. Note that the complete graph $K_s$ is an induced subgraph of $G$, so by interlacing, $\la_{n-s+2}\le-1$. Therefore $\la_{\frac{n}{2}}\le-1$ and thus $\he(G)\ge n$ and this is greater than
 $2\sqrt{n-1}$ for $n\ge3$. So we may assume that $p\ge2$. This implies that  $q\ge2$ as well.
Note that either $p\le \frac{n}{2}+2$ or $q\le \frac{n}{2}+2$. Suppose $q\le \frac{n}{2}+2$; the proof is similar for the other case.  If we also have  $p\le \frac{n}{2}+2$, we are done. So let $p\ge \frac{n}{2}+2$.
  We observe that
\begin{align*}
    \e(G)&=2(\la_1+\cdots+\la_p)\\
&\le2(\la_1+\cdots+\la_r)+2(\la_{p-2r}+\cdots+\la_r)\\
&\le\he(G)+\frac{p-r}{r}\he(G).
\end{align*}
It turns out that
$$\he(G)\ge\frac{n}{2p}\e(G).$$
On the other hand, we see that $\e(G)\ge p+2$, as the energy of any graph is at least the rank of its adjacency matrix (\cite{Fa}, see also \cite{agz}). Combining the above inequalities we find
\begin{align*}
   \he(G)&\ge\frac{n}{2p}(p+2)\\
     &= \frac{n}{2}+\frac{n}{p}\\
     &\ge \frac{n^2}{2(n-2)},
\end{align*}
and the last value is greater  than $2\sqrt{n-1}$ for $n\ge4$.}
\end{proof}

\section{ A construction of ${\rm srg}(4t^2 +4t +2,\, 2t^2 +3t +1,\,
t^2 +2t,\, t^2 + 2t +1)$}

In this section we give an infinite family of strongly regular graphs with parameters $(n, k, \lambda, \mu) =
(4t^2 +4t +2, 2t^2 +3t +1,
t^2 +2t, t^2 + 2t +1)$ 
whose construction was communicated by Willem Haemers to us. He attributes the construction to J.J. Seidel.

Let $G$ be a graph with  vertex set $X$, and  $Y \subseteq X$.
From $G$ we obtain a new graph by leaving adjacency and non-adjacency inside $Y$ and
$X \setminus Y$ as it was, and interchanging adjacency and non-adjacency between $Y$ and
$X\setminus Y$ . This new graph
is said to be obtained by {\em Seidel
switching} with respect to the set of vertices $Y$.

Let $q=2t+1$ be a prime power.
Let $\Gamma$ be the Paley graph of order $q^2$, that is, the graph with vertex set GF$(q^2)$ and $x \sim y$ if $x-y$ is not a square in GF$(q^2)$.
Let $x$ be a primitive element of GF$(q^2)$ and consider $V= \{x^{i(q+1)} \mid i=0, \ldots, q-1\} \cup \{0\}$. Then $V$ is the subfield GF$(q)$ of GF$(q^2)$ and $V$ forms a coclique of size $q$.
Now $\{a + V \mid a \in \hbox{GF}(q^2)\}$ forms a partition into $q$
cocliques of size $q$.
Add an isolated vertex
and apply the Seidel switching with respect to the union of $t$ disjoint cocliques.
The resulting graph is a strongly regular graph with parameters  $(4t^2 +4t +2,\, 2t^2 +t,\,
t^2-1, \, t^2)$. Taking the complement of this graph give us a strongly regular graph with the required parameters. Note that for $t=1$, there exists only one such a graph namely the Johnson graph $J(5,2)$, for $t=2$ there are 10. For $t \leq 9$ they do exist and  $t=10$ seems the smallest open case.

\noindent{\bf Acknowledgements.} We are very grateful for the discussions with Patrick Fowler and Ivan Gutman.
Partick Fowler suggested the name of H\"uckel energy and provided us with the reference \cite{Fow}. Ivan Gutman presented us the history of the H\"uckel energy and provided us with the references \cite{c, cl, h31, h40}.

\end{document}